\DeclareMathOperator{\Tr}{Tr}	                            
\newtheorem{theorem}{Theorem} 
\newtheorem{proposition}{Proposition}          
\newtheorem{corollary}{Corollary}
\theoremstyle{definition}
\begin{document}

\title[Eigenvalue Sums of Magnetic Laplacians]{Eigenvalue Sums of Combinatorial Magnetic Laplacians on Finite Graphs}


\author{John Dever}

\address{John Dever,\\ School of Mathematics\\
Georgia Institute of Technology\\
686 Cherry Street\\
Atlanta, GA 30332-0160 USA\\
\email{jdever6@math.gatech.edu}}


\date{DD.MM.YYYY}                               

\keywords{magnetic graph Laplacian; graph Laplacian; eigenvalue inequalities; eigenvalue sums; adjacency matrix; graph spectrum; half-band}

\subjclass{05C50, 05C20,05C15,15B57,15A42}


\begin{abstract}
We give a construction of a class of magnetic Laplacian operators on finite directed graphs. We study some general combinatorial and algebraic properties of operators in this class before applying the Harrell-Stubbe Averaged Variational Principle to derive several sharp bounds on sums of eigenvalues of such operators. In particular, among other inequalities, we show that if $G$ is a directed graph on $n$ vertices arising from orienting a connected subgraph of $d$-regular loopless graph on $n$ vertices, then if $\Delta_\theta$ is any magnetic Laplacian on $G$, of which the standard combinatorial Laplacian is a special case, and $\lambda_0\leq \lambda_1\leq ...\leq\lambda_{n-1}$ are the eigenvalues of $\Delta_{\theta},$ then for $k\leq \frac{n}{2},$ we have \[\frac{1}{k}\sum_{j=0}^{k-1}\lambda_j \leq d-1.\]
        
\end{abstract}

\maketitle



\section{Introduction}

We will assume $G$ to be a finite directed graph without repeated (directed) edges or self edges (loopless), which we realize as a pair of maps $s,t:\mathscr{E}\rightarrow \mathscr{V}$ of finite sets, such that if $e\in\mathscr{E},$ then $s(e)\neq t(e);$ and if $e_1,e_2\in \mathscr{E}$ with $e_1\neq e_2,$ then $(s(e_1),t(e_1))\neq (s(e_2),t(e_2)).$  The maps $s,t$ are called the  source and target, respectively. The set $\mathscr{E}$ is called the set of directed edges, and the set $\mathscr{V}$ is called the set of vertices. 

If $e\in \mathscr{E}$ is an edge, we think of the vertex $s(e)=se\in \mathscr{V}$ as the ``source" of 
the edge $e$ and  the vertex $t(e)=te\in \mathscr{V}$ as the ``target" of the edge $e.$ If $e\in 
\mathscr{E}$ and $v,w\in \mathscr{V}$ with $se=v,$ $te=w,$ we adopt the notations $vw$ and $v \rightarrow w$ for $e.$  

In this paper we shall primarily study a class of operators defined as follows. Given a map $\theta:
\mathscr{E}\rightarrow [-\pi,\pi]$ such that if $vw, wv \in \mathscr{E}$ then $\theta(vw)=-\theta(wv)$; if $f:\mathscr{V}\rightarrow \mathbb{C}$, for $v\in \mathscr{V}$ let 
\[\Delta_{\theta}(f)(v):= \sum_{e\in \mathscr{E}, te=v} (f(v)-e^{i\theta(e)}f(se))+\sum_{e\in \mathscr{E}, se=v} (f(v)-e^{-i\theta(e)}f(te)).\] Such operators are referred to in the literature as discrete magnetic Laplacians, magnetic combinatorial Laplacians, or discrete magnetic Sch\"{o}dinger operators \cite{hig}. The usual (combinatorial) graph Laplacian corresponds to the choice $\theta(e)=0$ for all $e\in \mathscr{E}.$ The signless Laplacian corresponds to the choice $\theta(e)=\pi$ for all $e\in \mathscr{E}.$ Note that since $e^{i\pi}=e^{-i\pi}=-1,$ it is irrelevant for the choice $\theta=\pi$ whether $\theta(uv)=-\theta(vu)$ for $uv,vu\in\mathscr{E}.$

The operators $\Delta_\theta$ may be connected to electromagnetism, justifying the term ``magnetic graph Laplacian." However, our focus in this paper is primarily mathematical. We study the combinatorial and spectral properties of these operators in the case of finite graphs without symmetry. Moreover, we do not insist that the graph be planar or that the $\theta$ values arise from some ``flux" in a physical model. Our study yields a number of inequalites on the sum of eigenvalues of magnetic Laplacian operators on graphs that apply, in particular, to the classical combinatorial graph Laplacian. 
 
 The outline of this paper is as follows. First, in the following section we define magnetic graph Laplacians. We study some algebraic and combinatorial properties of these operators. Next, in section 3 we recall the averaged variational principle from \cite{Var}. Finally, in section 4 we apply the variational principle to derive bounds for eigenvalue sums of graph magnetic Laplacians. 

\section{Magnetic Graph Laplacians}

We adopt the following notation. If $A$ is a finite set, let $C(A)$ be the set of complex valued functions on $A.$ For $a\in A,$ let $\hat{a}$ be the indicator function at $a,$ that is $\hat{a}(x)$ is $1$ for $x=a$ and $0$ for $x\neq a.$ Then the $(\hat{a})_{a\in A}$ form a basis for $C(A).$ As $C(A)$ is isomorphic to $\mathbb{C}^{|A|},$ where $|A|$ denotes the cardinality of $A,$ it is a Hilbert space with inner product induced by our choice of standard basis $(\hat{a})_{a\in A}.$ As a convention \textit{we take all inner products to be conjugate linear in the first argument}. Also, if $T$ is an operator on a finite dimensional Hilbert space, by $T^*$ we mean its adjoint, that is the operator whose matrix representation in the standard basis is the conjugate transpose of the matrix representing $T$ in the standard basis. For $v$ a vertex, let $d_v$ be the degree of $v\in \mathscr{V}.$ For $f:\mathscr{E}\rightarrow \mathbb{C}$ any complex valued function, we may consider the corresponding multiplication operator $\hat{f}:C(\mathscr{E})\rightarrow C(\mathscr{E})$, defined by its action on basis vectors, $\hat{e}\mapsto f(e)\hat{e}$ for $e\in\mathscr{E}.$ Then the adjoint of $f$ is the multiplication operator, again defined by its values on the basis of $C(\mathscr{E})$, $\hat{e}\mapsto \overline{f(e)}\hat{e}.$ By the above notation we denote it by $\widehat{\bar{f}}.$ Let $\theta:\mathscr{E}\rightarrow [-\pi,\pi]$. Then, as above, we may also consider $\widehat{e^{i\theta}}$ as a multiplication operator $C(\mathscr{E})\rightarrow C(\mathscr{E}).$ Observe that $s,t$ induce operators $\hat{s},\hat{t}:C(\mathscr{E})\rightarrow C(\mathscr{V})$ by extending linearly from the action on basis vectors $\hat{s}(\hat{e}):=\widehat{s(e)}$ and $\hat{t}(\hat{e}):=\widehat{t(e)}$ for $e\in\mathscr{E}.$ The algebraic point of view we take is similar to the one in \cite{sunada}.

Let $\theta:\mathscr{E}\rightarrow [-\pi,\pi]$ such that if $v,w\in\mathscr{V}$ with $vw,wv\in \mathscr{E},$ then $\theta(vw)=-\theta(wv)$. Define a quadratic form $Q_{\theta}$ by $Q_{\theta}(f)=\sum_{e\in\mathscr{E}}|f(te)-e^{i\theta(e)}f(se)|^2.$  Then let $d_{\theta}:=\hat{t}-\hat{s}\widehat{e^{-i\theta}}$ and define the (combinatorial) \textit{magnetic graph Laplacian} \[\Delta_\theta:=d_\theta d_\theta^*=(\hat{t}-\hat{s}\widehat{e^{-i\theta}})
(\hat{t}^*-\widehat{e^{i\theta}}\hat{s}^*).\] Then $Q_\theta(f)=\langle f, \Delta_{\theta} f \rangle.$ Note, by its factorization as a square, $\Delta_{\theta}$ is a positive, self-adjoint operator.

\subsection{Properties of Magnetic graph Laplacians}
Let $D=\hat{s}\hat{s}^*+\hat{t}\hat{t}^*,$ $A_{\theta}:=\hat{s}\widehat{e^{-
i\theta}}\hat{t}^*+\hat{t}\widehat{e^{i\theta}}\hat{s}^*.$ Then by expanding the product $(\hat{t}-\hat{s}\widehat{e^{-i\theta}})(\hat{t}^*-\widehat{e^{i\theta}}\hat{s}^*),$ we see \[\Delta_{\theta}=D-A_{\theta}.\]  Note that $\Delta_0=D-A_0$ is the standard graph Laplacian and that $\Delta_{\pi}=D+A_0.$

It can be seen from either the definition in terms of $s,t$ or from the quadratic form that both the standard Laplacian $\Delta_0$ and $\Delta_{\pi}$ are independent of orientation, as interchanging the roles of $se$ and $te$ for any given edge leaves them invariant. Note, however, that in general $\Delta_\theta$ is highly dependent on the orientation.

Since $\det(\Delta_\theta)=0$ if and only if the lowest eigenvalue $\inf_{\|f\|_2=1} Q_{\theta}f =0,$ and since the set $\|f\|_2=1$ is compact, as the space $C(\mathscr{V})$ is finite dimensional; we have $\det(\Delta_\theta)=0$ if and only if there exists an $f\neq 0$ with $Q_\theta(f)=0.$ This occurs, by the form of $Q_\theta$ given above, if and only if $f(te)=e^{i\theta(e)}f(se)$ for all $e.$ This suggests the following result. 

\begin{proposition} \label{o}
Suppose $k$ is a positive integer. An undirected graph is bipartite if and only if $\det(\Delta_{\frac{\pi}{k}})=0$ for some orientation. An undirected graph is tripartite if and only if $\det(\Delta_{\frac{2\pi}{3}})=0$ for some orientation.
\end{proposition}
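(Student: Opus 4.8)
\textit{Proof proposal.} The plan is to use the characterization recorded just before the statement: for a fixed orientation, $\det(\Delta_\theta) = 0$ if and only if there is a nonzero $f \in C(\mathscr{V})$ with $f(te) = e^{i\theta(e)} f(se)$ for all $e \in \mathscr{E}$. Write $\alpha$ for the constant value of $\theta$ — either $\pi/k$ or $2\pi/3$ — so that we seek an orientation and a nonzero $f$ with $f(te) = e^{i\alpha} f(se)$ on every edge. I will work with a connected $G$ (for a disconnected $G$ one reads the conclusion componentwise, since $\det(\Delta_\theta)=0$ only detects the existence of one good component). The organizing observation is: such a pair exists precisely when $G$ carries a vertex potential $\phi \colon \mathscr{V} \to \mathbb{R}/2\pi\mathbb{Z}$ with $\phi(u) - \phi(v) \equiv \pm\alpha \pmod{2\pi}$ on every undirected edge $\{u,v\}$. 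Indeed, from such a $\phi$ put $f = e^{i\phi}$ and orient each edge so that its target $y$ and source $x$ satisfy $\phi(y) \equiv \phi(x) + \alpha$ (at least one of the two orientations does this, by the hypothesis on $\phi$); conversely, a nonzero $f$ has $|f|$ equal to a positive constant, since $|e^{i\alpha}| = 1$ and $G$ is connected, and then $\phi := \arg f$ works.

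The forward implications then become explicit constructions. If $G$ is bipartite with parts $X, Y$, set $\phi \equiv 0$ on $X$ and $\phi \equiv \pi/k$ on $Y$; every edge-difference is $\pm \pi/k$, so $f = \mathbf{1}_X + e^{i\pi/k}\mathbf{1}_Y$ together with the orientation sending all edges from $X$ to $Y$ does the job. If $G$ is tripartite, say with a proper $3$-colouring into classes $X_0, X_1, X_2$, set $\phi(v) = \tfrac{2\pi}{3}j$ for $v \in X_j$; for an edge joining $X_a$ and $X_b$ we have $b - a \equiv \pm 1 \pmod 3$ because $b \neq a$, so the edge-difference is $\pm\tfrac{2\pi}{3}$, and exactly one orientation of the edge makes $\phi$ increase by $\tfrac{2\pi}{3}$ across it. Note that $k$ being an integer plays no role in these constructions; it enters only in the converse.

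For the converses, fix a base vertex $v_0$. Telescoping the edge-differences along any path from $v_0$ to $v$ shows $\phi(v) - \phi(v_0) \equiv n_v\alpha \pmod{2\pi}$ for some $n_v \in \mathbb{Z}$, independent of the chosen path because $\phi$ is a genuine function. For $\alpha = 2\pi/3$ this yields a well-defined $c \colon \mathscr{V} \to \mathbb{Z}/3$ with $\phi(v) \equiv \tfrac{2\pi}{3}c(v)$, and across any oriented edge $v \to w$ we get $c(w) \equiv c(v) + 1 \not\equiv c(v)$; hence $c$ is a proper $3$-colouring and $G$ is tripartite. For $\alpha = \pi/k$ I run the cycle argument instead: around a cycle of (undirected) length $\ell$ the edge-differences sum to $0 \pmod{2\pi}$, so the signed count $\sigma \in \mathbb{Z}$ of the $\pm1$ steps satisfies $\tfrac{\pi}{k}\sigma \equiv 0 \pmod{2\pi}$, i.e.\ $2k \mid \sigma$; since $\sigma \equiv \ell \pmod 2$, $\ell$ must be even, so $G$ has no odd cycle and is bipartite. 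The two points that need care are the tripartite forward construction — which works exactly because $\{1, 2\}$ coincides with $\{\pm 1\}$ in $\mathbb{Z}/3$, so both relevant colour gaps are realizable by orienting a single edge — and the disconnected case; everything else is bookkeeping with unimodular constants.
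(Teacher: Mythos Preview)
Your proof is correct and follows essentially the same strategy as the paper: both reduce to the characterization $\det(\Delta_\theta)=0 \iff \exists\, f\neq 0$ with $f(te)=e^{i\alpha}f(se)$, construct $f$ explicitly from a given $2$- or $3$-colouring for the forward direction, and read a colouring off the values of $f$ for the converse. Your repackaging via the potential $\phi=\arg f$ with edge-differences $\pm\alpha$ is a clean way to unify the two cases. The one genuine deviation is in the bipartite converse: the paper observes that (after normalizing $f(v_0)=1$) the values of $f$ lie among the $2k$-th roots of unity $\omega_1^j$ and defines the bipartition directly by the parity of $j$, whereas you instead run a cycle argument to show every cycle has even length. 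Both are standard and equally short; the paper's route gives the bipartition in one line without invoking the ``no odd cycle $\Rightarrow$ bipartite'' characterization, while your cycle argument makes the role of the integrality of $k$ (via $2k\mid \sigma$) slightly more transparent. Your caveat about disconnected graphs is also a fair reading; the paper's ``consider components separately'' is making the same reduction.
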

\begin{proof}
Without loss of generality we may assume the graph is connected since we may consider its components separately. Let $k$ a positive integer. Let $\omega_1=e^{\frac{i\pi}{k}}.$ Suppose for some orientation that $Q_{\frac{\pi}{k}}(f)=\sum_e|f(te)-\omega_1f(se)|^2=0$ for some $f\neq 0.$ By re-scaling $f$ if necessary and possibly multiplying by a global phase, since $f\neq 0,$ we may assume that $f(v_0)=1$ for some 
$v_0\in\mathscr{V}.$ Then since the graph is connected we have that $f$ takes on at most 
the values $\omega_1^j$ for $j\in\{0,1,...,2k-1\}$. Let $A$ be the set of vertices where $f$ takes on values $\omega_1^{j}$ for $j$ even and $B$ the set of vertices where $f$ takes on values $\omega_1^{j}$ for $j$ odd. Since for any edge $e,$ $f(te)=\omega_1f(se),$ vertices in $A$ can only be connected to vertices in $B$ and vertices in $B$ can only be connected to vertices in $A$. So $A,B$ is a bipartition.
Conversely, suppose $A,B$ is a bipartition of an undirected graph. Then define $f:=1_A+\omega_11_B.$ Define an orientation by having $s$ always take values in $A$, $t$ values in $B$. Then for any $e:se\rightarrow te$ we have $|f(te)-\omega_1f(se)|^2=|\omega_1-\omega_1(1)|^2=0.$ Hence
$Q_{\frac{\pi}{k}}(f)=0.$ 

As for the second assertion, let $\omega_2:=e^{\frac{2i\pi}{3}}$ and suppose for some 
orientation that $Q_{\frac{2\pi}{3}}(f)=\sum_e |f(te)-\omega_2 f(se)|^2=0$ for some 
$f\neq 0.$ Again by re-scaling $f$ if necessary and possibly multiplying by a global phase, since $f\neq 0,$ we may assume that $f(v_0)=1$ for some 
$v_0.$ Then since the graph is connected we have that $f$ takes on at most 
the values $1,\omega_2,\omega_2^2.$ So define 
a tripartition $A_0,A_1,A_2,$ with $A_j,$ for $j=0,1,2,$ the set of vertices where $f$ takes on the value $\omega_2^{j}$. Conversely, suppose $A, B, C$ is a tripartition of an undirected graph. Define $f:=1_A+\omega_21_B+\omega_2^21_C.$ Then define an 
orientation by the following rules. For any edge $e$ between an $A$ vertex 
and a $B$ vertex, take $se$ to be the $A$ vertex, $te$ the $B$ vertex. For any edge $e$ between a $B$ vertex and an $C$ vertex, set $se$ to be the $B$ vertex, $te$ to be the $C$ vertex. For any edge $e$
between a $C$ vertex and an $A$ vertex, set $se$ to be the $C$ vertex, $te$ to be the $A$ vertex.  
Then, by construction, for any directed $e:se \rightarrow te,$ we have $|
f(te)-\omega_2 f(se)|^2=0.$ Hence $Q_{\frac{2\pi}{3}}(f)=0,$ and the proof is complete.
\end{proof}

The above proposition hints at the computational difference between determining whether a graph is 2-colorable or 3-colorable. Indeed, using the proposition to determine whether a graph is 2-colorable requires only computing one determinant since $\Delta_\pi$ is independent of orientation. However, for a graph with $n$ edges, using the proposition to determine whether it is $3$-colorable requires checking at most $2^n$ determinants, one for each orientation.

Following \cite{Lieb}, we call a unitary operator $U:C(\mathscr{V})\rightarrow C(\mathscr{V})$ a gauge transformation if it is multiplication operator with respect to the basis of vertices. So $U\hat{v}=\phi(v)\hat{v}$ for $v\in\mathscr{V}$ where $\phi:\mathscr{V}\rightarrow \mathbb{C}$ with $|\phi|=1.$ 
\begin{proposition} $\Delta_\theta$ is unitarily equivalent under a gauge transformation to the standard Laplacian $\Delta_0$ if and only if $\det(\Delta_\theta)=0.$
\end{proposition}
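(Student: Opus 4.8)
The plan is to prove the two implications separately; the content lies in the converse. For the forward direction, suppose $U$ is a gauge transformation with $U\Delta_\theta U^* = \Delta_0$. Since $U$ is unitary, $\Delta_\theta$ and $\Delta_0$ have the same eigenvalues with multiplicities, so it suffices to note that $\Delta_0$ is always singular: $\ker\Delta_0 = \ker d_0^*$, and computing on a basis edge gives $(d_0^* g)(e) = g(te) - g(se)$, so every $g\in C(\mathscr V)$ that is constant on each connected component lies in $\ker d_0^*$; hence $\dim\ker\Delta_0 \geq 1$ and $\det\Delta_\theta = \det\Delta_0 = 0$.

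For the converse, assume $\det\Delta_\theta = 0$; I take $G$ connected — the converse genuinely needs this, since a null vector of $\Delta_\theta$ could vanish on some components. By the discussion preceding Proposition~\ref{o}, there is a nonzero $f\in C(\mathscr V)$ with $f(te) = e^{i\theta(e)}f(se)$ for every $e\in\mathscr E$. Since $|f(te)| = |f(se)|$ along every edge and $G$ is connected, $|f|$ is a nonzero constant on $\mathscr V$, so after rescaling I may assume $|f|\equiv 1$. Put $\phi := \overline f$ and let $U$ be the gauge transformation $U\hat v = \phi(v)\hat v$ (equivalently $U^*\hat v = f(v)\hat v$); conjugating the defining relation for $f$ gives $\phi(te) = e^{-i\theta(e)}\phi(se)$ for all $e$.

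It remains to verify $U\Delta_\theta U^* = \Delta_0$, for which I will use the factorization $\Delta_\theta = d_\theta d_\theta^*$ with $d_\theta = \hat t - \hat s\,\widehat{e^{-i\theta}}$. Introduce the auxiliary unitary multiplication operator $\widetilde U$ on $C(\mathscr E)$ given by $\widetilde U\hat e = \phi(te)\hat e$. Evaluating on a basis edge and using $|\phi(te)| = 1$ and $\phi(te) = e^{-i\theta(e)}\phi(se)$ yields
\[ U d_\theta \widetilde U^*\hat e \;=\; \overline{\phi(te)}\,\phi(te)\,\widehat{te} \;-\; \overline{\phi(te)}\,e^{-i\theta(e)}\phi(se)\,\widehat{se} \;=\; \widehat{te} - \widehat{se} \;=\; d_0\hat e, \]
so $U d_\theta \widetilde U^* = d_0$. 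Taking adjoints gives $\widetilde U d_\theta^* U^* = d_0^*$, and therefore
\[ U\Delta_\theta U^* \;=\; (U d_\theta \widetilde U^*)(\widetilde U d_\theta^* U^*) \;=\; d_0 d_0^* \;=\; \Delta_0. \]
Alternatively, one may write $\Delta_\theta = D - A_\theta$; since $D = \mathrm{diag}(d_v)$ is unchanged by a gauge transformation, it is enough to check $U A_\theta U^* = A_0$ entrywise, and the $(w,v)$ entry of $U A_\theta U^*$ equals $\phi(w)\overline{\phi(v)}(A_\theta)_{w,v}$, which the relation $\phi(te) = e^{-i\theta(e)}\phi(se)$ forces to be $(A_0)_{w,v}$ (the case of a pair $v,w$ joined by two opposite edges uses $\theta(vw) = -\theta(wv)$).

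I do not expect a genuine obstacle here; the two points that need care are the direction of the gauge phase — it must be $\phi = \overline f$, not $f$, which is tied both to the convention that inner products are conjugate-linear in the first argument and to the sign of the exponent in $d_\theta$ — and, in the converse, the remark that $|f|$ is constant along edges, which is exactly what allows the null vector to be normalized to unit modulus so that it defines an honest gauge transformation.
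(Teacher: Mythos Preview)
Your proof is correct. The paper takes essentially the route you sketch as your ``alternative'': it uses $\Delta_\theta = D - A_\theta$ and checks by a direct entrywise calculation that $\Delta_\theta U = U\Delta_0$, where $U$ is built from the null vector $\phi$ itself (not its conjugate), which amounts to conjugating in the opposite direction, $U^*\Delta_\theta U = \Delta_0$; so your closing remark that the phase \emph{must} be $\overline f$ is slightly overstated---either choice works, it just flips which side $U$ sits on. Your primary argument, factoring $\Delta_\theta = d_\theta d_\theta^*$ and introducing the edge-space gauge $\widetilde U$ to show $U d_\theta \widetilde U^* = d_0$, is a genuinely different and cleaner route: it exhibits the gauge equivalence already at the level of the coboundary operator, from which the Laplacian statement follows by squaring, and it makes transparent that the null-vector relation $\phi(te) = e^{-i\theta(e)}\phi(se)$ is exactly the cocycle condition needed to trivialize the phase. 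The paper's entrywise computation avoids the auxiliary operator but is more opaque about \emph{why} the construction works.
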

\begin{proof}
We may assume the underlying graph is connected since $\Delta_\theta$ has a decomposition as a direct sum of corresponding $\Delta_\theta$ operators on each connected component. 

Since $\det(\Delta_0)=0,$ one direction is clear. For the other, suppose 
$\det(\Delta_\theta)=0.$ Let $\phi$, normalized with the supremum norm $\|\phi\|_{\infty}=1,$ such that $\langle \phi,\Delta_\theta \phi \rangle =0.$ Since $\phi$ is normalized and the graph is connected, by the equation \[Q_{\theta}(\phi)=\sum_{e\in \mathscr{E}} |\phi(te)-e^{i\theta(e)}\phi(se)|^2,\] the vanishing of $\langle \phi,\Delta_\theta \phi \rangle$ ensures that $|\phi|=1$. Define $U:C(\mathscr{V})\rightarrow C(\mathscr{V})$ by $U(\hat{v}):=\phi(v)\hat{v}$ for $v\in\mathscr{V}$ and extending linearly. Since $|\phi|= 1,$ $U$ is a gauge transformation. 

Define sesquilinear 
forms $Q,T$ by \[Q(f,g):=\langle f, \Delta_\theta Ug \rangle\;\mbox{and}\; T(f,g):=\langle f, U\Delta_0 g \rangle.\] Let $v,w\in\mathscr{V}.$ We have  
\begin{align*} Q(\hat{v},\hat{w})&=\langle \hat{v}, \Delta_\theta U\hat{w}\rangle = 
\langle \hat{v}, \Delta_\theta \phi(w) \hat{w} \rangle = \phi(w)\langle \hat{v}, 
\Delta_\theta \hat{w}\rangle = \phi(w)(\langle \hat{v}, D\hat{w}\rangle - \langle \hat{v}, A_\theta 
\hat{w} \rangle) \\ &= \phi(w)(\delta_{v,w}\mbox{deg}(v) - \langle \hat{v},(\hat{s}\widehat{e^{-
i\theta}}\hat{t}^*+\hat{t}\widehat{e^{i\theta}}\hat{s}^*) \hat{w} \rangle).
\end{align*} Then if $v=w,$ the result is 
$\phi(v)\deg(v).$ If $v\neq w$ and $v,w$ non-adjacent, then $Q(\hat{v},\hat{w})=0.$ If $e=vw$ is an 
edge, substituting $e^{i\theta(v\rightarrow w)}\phi(v)$ for $\phi(w)$ and $e^{-i\theta(v\rightarrow w)}$ for $\langle \hat{v}, A_\theta \hat{w}\rangle,$ we have $Q(\hat{v},\hat{w})= -\phi(v)$. Similarly an edge of the form $wv$ 
results in $Q(\hat{v},\hat{w})=-\phi(v).$ But also \[T(\hat{v},\hat{w})=\langle \hat{v}, U\Delta_0 \hat{w} \rangle = \langle U^*\hat{v},
\Delta_0 \hat{w} \rangle = \langle \overline{\phi(v)} \hat{v}, \Delta_0 \hat{w} \rangle =\phi(v)\langle \hat{v}, \Delta_0 \hat{w}\rangle,\] which is $\phi(v)$deg$(v)$ if $v=w$, $0$ if $v\neq w$ and $v, w$  non-adjacent, and any edge of the form $vw$ or $wv$ results in $-\phi(v).$ Hence 
$Q=T.$ Therefore $\Delta_\theta U = U\Delta_0,$ which implies $\Delta_\theta$ is unitarily equivalent to $\Delta_0$ under a gauge transformation.
\end{proof}

If $e$ is an oriented edge, say $uv$, let $\bar{e}=vu$ be the reverse edge. Then extend 
$\theta$ by $\theta(\bar{e})=-\theta(e).$ Then if $v=v_0,...,v_m=v$ is a closed 
(unoriented) walk, the flux is defined by $\sum_{j=0}^{m-1} \theta(v_jv_{j+1}) \mbox{\;(mod\;}  2\pi).$ Hence, if $v_jv_{j+1}$ is an oriented edge, then it contributes 
$\theta(v_jv_{j+1})$ to the flux; and if $v_{j+1}v_j$ is an edge, then $v_jv_{j+1} = 
\overline{v_{j+1}v_j}$ contributes $\theta(v_jv_{j+1}) = \theta(\overline{v_{j+1}v_j})=-\theta(v_{j+1}v_j)$ to the flux. 

Note a directed graph has an underlying undirected graph with edge relation $v\sim w$ if $vw$ or $wv$ is a directed edge. By a walk in a graph we mean a finite list of vertices $v_0,v_1,...,v_n$ such that $v_j\sim v_{j+1}$ for $0\leq j<n.$ A closed walk is a walk with the initial and final vertex coinciding. 

The proof of the following proposition, in the case of $A_\theta,$ may be found in \cite{Lieb}. In order to derive this version from the one presented there, note that gauge transformations are diagonal in the standard basis for $C(\mathscr{V})$ and thus commute with $D.$ 

\begin{proposition} Two magnetic Laplacians $\Delta_{\theta_1},$ $\Delta_{\theta_2}$ are unitarily equivalent under a gauge transformation if and only if $\theta_1$ and $\theta_2$ induce the same fluxes through closed walks. \end{proposition}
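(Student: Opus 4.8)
The plan is to prove the two directions separately, exploiting the factorization $\Delta_\theta = D - A_\theta$ and the fact (noted in the remark preceding the statement) that gauge transformations are diagonal in the vertex basis and hence commute with $D$; this reduces the problem entirely to the corresponding statement for the (signed) adjacency-type operators $A_{\theta_1}, A_{\theta_2}$, which is cited from \cite{Lieb}. Concretely, if $U$ is a gauge transformation with $U\hat v = \phi(v)\hat v$, then $U^{-1}\Delta_{\theta_1} U = D - U^{-1}A_{\theta_1}U$, so $U^{-1}\Delta_{\theta_1}U = \Delta_{\theta_2}$ if and only if $U^{-1}A_{\theta_1}U = A_{\theta_2}$, and the latter equivalence (gauge-equivalence of the magnetic adjacency operators $\iff$ equality of fluxes) is exactly the form of the result proved in \cite{Lieb}. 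So strictly speaking the main content is the translation remark, and I would phrase the proof as: invoke \cite{Lieb} for $A_\theta$, observe the commutation $UD = DU$, and conclude.

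However, to keep the paper self-contained I would also sketch the underlying argument directly in terms of $\theta$. For the easy direction, suppose $U^{-1}\Delta_{\theta_1}U = \Delta_{\theta_2}$, equivalently $U^{-1}A_{\theta_1}U = A_{\theta_2}$. Computing the $(v,w)$ matrix entry of $U^{-1}A_{\theta_1}U$ for an edge $e = v\to w$ gives $\overline{\phi(v)}\, e^{i\theta_1(e)}\,\phi(w)$, which must equal the corresponding entry $e^{i\theta_2(e)}$ of $A_{\theta_2}$; hence $e^{i(\theta_2(e)-\theta_1(e))} = \overline{\phi(v)}\phi(w)$, i.e. writing $\phi(v) = e^{i\psi(v)}$ we get $\theta_2(e) - \theta_1(e) \equiv \psi(w) - \psi(v) \pmod{2\pi}$. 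Summing this telescoping identity around any closed walk $v_0, v_1, \dots, v_m = v_0$ (with the sign convention for reversed edges built into the extension of $\theta$) makes the right side vanish, so $\sum_j \theta_2(v_jv_{j+1}) \equiv \sum_j \theta_1(v_jv_{j+1}) \pmod{2\pi}$; that is, $\theta_1$ and $\theta_2$ induce the same fluxes.

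For the converse, assume $\theta_1$ and $\theta_2$ induce the same flux through every closed walk; I want to produce $\psi:\mathscr V \to \mathbb R$ with $\theta_2(e) - \theta_1(e) \equiv \psi(te) - \psi(se) \pmod{2\pi}$ for all edges $e$, and then set $\phi = e^{i\psi}$ and reverse the computation above to get $U^{-1}A_{\theta_1}U = A_{\theta_2}$ and hence $U^{-1}\Delta_{\theta_1}U = \Delta_{\theta_2}$. We may assume $G$ connected (treat components separately). Fix a base vertex $v_0$, set $\psi(v_0) = 0$, and for any other vertex $v$ choose a walk $v_0 = u_0, u_1, \dots, u_r = v$ and define $\psi(v) := \sum_{j=0}^{r-1}\bigl(\theta_2(u_ju_{j+1}) - \theta_1(u_ju_{j+1})\bigr) \pmod{2\pi}$. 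Well-definedness is precisely where the flux hypothesis is used: two walks from $v_0$ to $v$ differ by a closed walk, on which $\sum(\theta_2 - \theta_1) = \sum\theta_2 - \sum\theta_1 \equiv 0 \pmod{2\pi}$ by assumption; and the hypothesis that $\theta_i(\bar e) = -\theta_i(e)$ makes the two notions (walk in the undirected graph versus the directed edges traversed) consistent. The relation $\theta_2(e) - \theta_1(e) \equiv \psi(te) - \psi(se)$ for a single edge $e$ then follows by extending a walk to $se$ by the edge $e$.

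The main obstacle — really the only nontrivial point — is the well-definedness of $\psi$ in the converse direction, i.e. checking that the value assigned via an arbitrary connecting walk is independent of the walk chosen; this is exactly the step that consumes the flux hypothesis, and one must be slightly careful about orientation bookkeeping (using $\theta_i(\bar e) = -\theta_i(e)$) so that ``closed walk in the underlying undirected graph'' matches the flux definition given before the statement. Everything else is the routine matrix-entry computation for $A_\theta$ together with the elementary observation $UD = DU$.
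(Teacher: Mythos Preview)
Your proposal is correct and matches the paper's own approach exactly: the paper does not give a self-contained proof but simply cites \cite{Lieb} for the $A_\theta$ version and observes that gauge transformations, being diagonal, commute with $D$, which is precisely your first paragraph. Your additional self-contained sketch (the coboundary construction of $\psi$ via path sums and the flux hypothesis for well-definedness) is a welcome elaboration that the paper omits; just double-check the sign of the $(v,w)$ entry of $A_\theta$ against the paper's convention $A_\theta=\hat{s}\widehat{e^{-i\theta}}\hat{t}^*+\hat{t}\widehat{e^{i\theta}}\hat{s}^*$, though this only swaps $\theta_1-\theta_2$ for $\theta_2-\theta_1$ and does not affect the argument.
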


\section{Averaged Variational Principle}
In this section we develop a tool (see \cite{Var}) for its origin) that will allow estimates on sums of eigenvalues of finite Laplacian operators.

If $M$ is a self-adjoint $n\times n$ matrix, we denote its eigenvalues by 
$\mu_0\leq \mu_1\leq... \leq \mu_{n-1}$ and a corresponding orthonormal 
basis of eigenvectors by $(u_{j})_{j=0}^{n-1}.$ If $V$ is any 
$k$ dimensional subspace of $\mathbb{C}^n$ and $(v_j)_{j=0}^{k-1}$ an orthonormal basis for $V,$ then we define \[\mbox{Tr}(M|_V):=\sum_{i=0}^{k-1} \langle v_i, Mv_i \rangle.\] Tr($M|_V)$ is independent of the basis chosen. Indeed, let $P_V$ be the projection onto $V$. Then $P_V=\sum_{i=0}^{k-1} v_iv_i^*.$ So \[\sum_{j=0}^{n-1} \mu_j \|P_V u_j\|^2 = \sum_{i=0}^{k-1} \sum_{j=0}^{n-1} \mu_j|\langle v_i,u_j\rangle|^2 =\sum_{i=0}^{k-1} v_i^*Mv_i,\] using the spectral decomposition of $M.$ Since the left hand side of the above string of equalities is independent of basis, the result holds.

We begin by stating the following classical result \cite{beck}. 
\begin{proposition} \label{a} With notation as above, for $1\leq k \leq n,$ we have 
\[\sum_{j=0}^{k-1} \mu_j=\inf_{\dim(V)=k} \text{Tr}(M|_V).\] In particular if $(v_i)_{i=0}^{k-1} \subset \mathbb{C}^n$ is any collection of orthonormal vectors, we have \[\sum_{j=0}^{k-1} \mu_j \leq \sum_{j=0}^{k-1} \langle v_j, Mv_j\rangle.\]
\end{proposition}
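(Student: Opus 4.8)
The plan is to establish the two inequalities whose combination gives the asserted equality, and then to read off the ``in particular'' statement as an immediate consequence. The easy direction, $\inf_{\dim(V)=k}\operatorname{Tr}(M|_V)\le\sum_{j=0}^{k-1}\mu_j$, I would handle by exhibiting a competitor: take $V=\operatorname{span}(u_0,\dots,u_{k-1})$, the span of the first $k$ eigenvectors. Then $(u_j)_{j=0}^{k-1}$ is an orthonormal basis of $V$, so $\operatorname{Tr}(M|_V)=\sum_{j=0}^{k-1}\langle u_j,Mu_j\rangle=\sum_{j=0}^{k-1}\mu_j$.

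For the reverse inequality I would start from the identity already derived just above the proposition: for any $k$-dimensional subspace $V$ with orthogonal projection $P_V$, one has $\operatorname{Tr}(M|_V)=\sum_{j=0}^{n-1}\mu_j\|P_Vu_j\|^2$. Put $p_j:=\|P_Vu_j\|^2$. The two structural facts I need are (i) $0\le p_j\le1$, since $P_V$ is an orthogonal projection and $\|u_j\|=1$; and (ii) $\sum_{j=0}^{n-1}p_j=\sum_{j=0}^{n-1}\langle u_j,P_Vu_j\rangle=\operatorname{Tr}(P_V)=\dim(V)=k$, using that $(u_j)$ is an orthonormal basis of $\mathbb{C}^n$. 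Thus it suffices to prove that $\sum_{j=0}^{n-1}\mu_jp_j\ge\sum_{j=0}^{k-1}\mu_j$ whenever $0\le p_j\le1$ and $\sum_{j}p_j=k$.

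This remaining step is a short exchange argument relying only on the ordering $\mu_0\le\cdots\le\mu_{n-1}$. I would write
\[\sum_{j=0}^{n-1}\mu_jp_j-\sum_{j=0}^{k-1}\mu_j=\sum_{j=0}^{k-1}\mu_j(p_j-1)+\sum_{j=k}^{n-1}\mu_jp_j,\]
then bound $\mu_j(p_j-1)\ge\mu_{k-1}(p_j-1)$ for $j<k$ (since $p_j-1\le0$ and $\mu_j\le\mu_{k-1}$) and $\mu_jp_j\ge\mu_{k-1}p_j$ for $j\ge k$ (since $p_j\ge0$ and $\mu_j\ge\mu_{k-1}$), and observe that the right-hand side is therefore at least $\mu_{k-1}\bigl(\sum_{j=0}^{n-1}p_j-k\bigr)=0$. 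Taking the infimum over all $k$-dimensional $V$ gives $\inf_{\dim(V)=k}\operatorname{Tr}(M|_V)\ge\sum_{j=0}^{k-1}\mu_j$, and combined with the first direction this yields the equality. Finally, given any orthonormal $(v_j)_{j=0}^{k-1}\subset\mathbb{C}^n$, their span $V$ is a $k$-dimensional subspace with $\operatorname{Tr}(M|_V)=\sum_{j=0}^{k-1}\langle v_j,Mv_j\rangle$, so the displayed inequality follows at once from the equality.

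I do not anticipate a real obstacle here; the only point requiring a little care is the verification that $(p_j)$ genuinely satisfies $0\le p_j\le1$ and $\sum_jp_j=k$ (the trace-of-a-projection computation), after which the rearrangement bound is elementary. One could instead simply invoke the Ky Fan minimum principle, but since the identity $\operatorname{Tr}(M|_V)=\sum_j\mu_j\|P_Vu_j\|^2$ is already available, the self-contained argument above is both shorter and keeps the exposition closed.
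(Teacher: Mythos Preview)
Your argument is correct. The two directions are handled cleanly, and the exchange bound with the weights $p_j=\|P_Vu_j\|^2$ is carried out without error; the constraints $0\le p_j\le1$ and $\sum_jp_j=k$ are exactly what is needed, and you justify them properly.

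The route, however, is not the paper's. The paper does not prove this proposition directly: it cites it as a classical result and then, after stating the Harrell--Stubbe averaged variational principle (Theorem~\ref{c}), recovers the inequality $\sum_{j=0}^{k-1}\mu_j\le\sum_{j=0}^{k-1}\langle v_j,Mv_j\rangle$ as a special case by taking $Z=\{0,\dots,n-1\}$ with counting measure, $Z_0=\{0,\dots,k-1\}$, and $\phi(l)=v_l$ for an orthonormal extension of the $v_j$. What your approach buys is self-containment: you use only the identity $\operatorname{Tr}(M|_V)=\sum_j\mu_j\|P_Vu_j\|^2$ already established just above the proposition, so no external theorem is invoked. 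What the paper's approach buys is the point it wants to make, namely that Proposition~\ref{a} is a genuine specialization of Theorem~\ref{c}; the derivation there is meant as an illustration of the averaged variational principle rather than as an independent proof.
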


The following variational principle from \cite{Var} is, as we shall see, a generalization of Proposition \ref{a}. The proof may be found in \cite{Var} and is omitted.
\begin{theorem} (Harrell-Stubbe) \label{c}
Let $M$ be a self adjoint $n\times n$ matrix with eigenvalues $\mu_0\leq \mu_1 \leq ... \leq \mu_{n-1}$ and corresponding normalized eigenvectors $(u_i)_{i=0}^{n-1}.$ Suppose $(Z,\mathscr{M},\mu)$ is a (positive) measure space and $\phi:Z\rightarrow \mathbb{C}^n$ is measurable with $\int_Z \|\phi(z)\|^2 d\mu(z) < \infty$. Then if $Z_0 \in \mathscr{M},$ for any $0\leq k \leq n-1,$ we have 
\[\mu_k \left(\int_{Z_0} \|\phi\|^2 d\mu - \sum_{j=0}^{k-1} \int_Z |\langle \phi,u_j\rangle|^2 d\mu\right) \leq \int_{Z_0} \langle \phi,M\phi\rangle d\mu - \sum_{j=0}^{k-1} \int_Z \mu_j|\langle \phi,u_j\rangle|^2 d\mu.\]
\end{theorem}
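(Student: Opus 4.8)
The plan is to deduce the inequality from a pointwise-in-$z$ algebraic estimate in the eigenbasis of $M$, integrate it over $Z_0$, and then invoke the ordering $\mu_0\le\mu_1\le\cdots\le\mu_{n-1}$ to enlarge the domain of integration from $Z_0$ to $Z$ in the correction terms. Throughout I would keep the paper's convention that $\langle\cdot,\cdot\rangle$ is conjugate-linear in the first slot.

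First I would fix $z\in Z$ and expand $\phi(z)=\sum_{l=0}^{n-1}c_l(z)\,u_l$ with $c_l(z):=\langle u_l,\phi(z)\rangle$, so that $\|\phi(z)\|^2=\sum_{l=0}^{n-1}|c_l(z)|^2$ and, since $Mu_l=\mu_l u_l$ with $M$ self-adjoint, $\langle\phi(z),M\phi(z)\rangle=\sum_{l=0}^{n-1}\mu_l|c_l(z)|^2\in\mathbb{R}$; note also $|c_l(z)|=|\langle\phi(z),u_l\rangle|$. Subtracting $\mu_k\|\phi(z)\|^2$ and discarding the nonnegative terms with index $l\ge k$ gives the pointwise inequality
\[\langle\phi(z),M\phi(z)\rangle-\mu_k\|\phi(z)\|^2=\sum_{l=0}^{n-1}(\mu_l-\mu_k)|c_l(z)|^2\;\ge\;\sum_{l=0}^{k-1}(\mu_l-\mu_k)|c_l(z)|^2,\]
which is the heart of the argument. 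Before integrating I would record the needed integrability: $z\mapsto\|\phi(z)\|^2$ is integrable by hypothesis, $|\langle\phi(z),M\phi(z)\rangle|\le\|M\|\,\|\phi(z)\|^2$ makes $z\mapsto\langle\phi(z),M\phi(z)\rangle$ integrable, and $0\le|c_l(z)|^2\le\|\phi(z)\|^2$ makes each $z\mapsto|c_l(z)|^2$ integrable (all are measurable because $\phi$ is). Integrating the pointwise bound over $Z_0$ yields
\[\int_{Z_0}\langle\phi,M\phi\rangle\,d\mu-\mu_k\int_{Z_0}\|\phi\|^2\,d\mu\;\ge\;\sum_{l=0}^{k-1}(\mu_l-\mu_k)\int_{Z_0}|c_l|^2\,d\mu.\]

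The only genuine point is the mismatch between the domain $Z_0$ on the left of the target inequality and the domain $Z$ appearing in its correction sums, and this is exactly where the eigenvalue ordering is used. For $0\le l\le k-1$ one has $\mu_l-\mu_k\le 0$, while $0\le\int_{Z_0}|c_l|^2\,d\mu\le\int_Z|c_l|^2\,d\mu$ since $Z_0\subseteq Z$ and the integrand is nonnegative; multiplying by the nonpositive factor $\mu_l-\mu_k$ reverses the inequality, so $(\mu_l-\mu_k)\int_{Z_0}|c_l|^2\,d\mu\ge(\mu_l-\mu_k)\int_Z|c_l|^2\,d\mu$. Substituting this into the previous display, writing $(\mu_l-\mu_k)\int_Z|c_l|^2\,d\mu=\mu_l\int_Z|c_l|^2\,d\mu-\mu_k\int_Z|c_l|^2\,d\mu$, collecting the $\mu_k$-terms on one side and the $\mu_l$-terms on the other, and using $|c_l|^2=|\langle\phi,u_l\rangle|^2$, produces exactly the asserted inequality; the case $k=0$ is just the integral over $Z_0$ of $\mu_0\|\phi(z)\|^2\le\langle\phi(z),M\phi(z)\rangle$. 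I do not expect a serious obstacle here — the subtlety is purely bookkeeping: getting the direction right when multiplying an inequality by $\mu_l-\mu_k\le 0$, and the elementary integrability estimates above. As a sanity check on the signs, specializing $Z=Z_0$ to a finite set with counting measure and $\phi$ enumerating an orthonormal family should reduce Theorem \ref{c} to Proposition \ref{a}, which is the sense in which it is a generalization.
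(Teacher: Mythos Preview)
Your argument is correct: the pointwise spectral identity $\langle\phi(z),M\phi(z)\rangle-\mu_k\|\phi(z)\|^2=\sum_{l}(\mu_l-\mu_k)|c_l(z)|^2$, discarding the nonnegative tail $l\ge k$, integrating over $Z_0$, and then using $\mu_l-\mu_k\le 0$ for $l<k$ to pass from $\int_{Z_0}$ to $\int_Z$ in the correction terms is exactly the standard proof. The integrability checks and the $k=0$ case are handled cleanly.

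As for comparison with the paper: the paper does not actually give a proof of Theorem~\ref{c}; it states the result, attributes it to Harrell--Stubbe, and writes ``The proof may be found in \cite{Var} and is omitted.'' Your proof is essentially the one in \cite{Var}, so there is no discrepancy to discuss.
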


Note that provided that $\mu_k \sum_{j=0}^{k-1} \int_Z |\langle \phi,u_j\rangle|^2 d\mu \leq \mu_k \int_{Z_0} \|\phi\|^2 d\mu,$ we have 
that 
\begin{equation} \label{f} \sum_{j=0}^{k-1} \int_Z \mu_j |\langle \phi,u_j\rangle|^2 d\mu \leq \int_{Z_0} \langle \phi,M\phi\rangle d\mu. 
\end{equation}

We now show that Proposition \ref{a} follows from the the previous Theorem \ref{c}, in particular from (\ref{f}). Let $v_1,...,v_{k-1}$ be a collection of orthonormal vectors in $\mathbb{C}^n.$ Take $Z:=\{0,1,...,n-1\}$ and $Z_0:=\{0,1,...,k-1\}$ with the counting measure. Extend the $v_j$ to an orthonormal basis for all of $\mathbb{C}^n.$ Then let $\phi(l):=v_l$. Then \[\sum_{j=0}^{k-1} \int_Z |\langle \phi,u_j\rangle|^2 d\mu = k = \int_{Z_0} \|\phi\|^2 d\mu.\] Hence (\ref{f}) then states 
\[\sum_{j=0}^{k-1}\mu_j \sum_{l=0}^{n-1} |\langle v_l ,u_j \rangle|^2 = \sum _{j=0}^{k-1} \mu_j \leq \sum_{j=0}^{k-1} \langle v_j , Mv_j \rangle,\] and we recover Proposition \ref{a}. 

\section{Inequalities for Sums of Eigenvalues of Magnetic Laplacians}
We now apply the averaged variational principle, Theorem \ref{c}, to $\Delta_\theta.$ \textit{For the remainder of this section we further assume that $G$ is connected, and for any $u,v\in \mathscr{V}$, if $uv\in\mathscr{E}$ then $vu\notin \mathscr{E}.$} In other words, we assume $G$ arises from orienting a connected, loopless, undirected graph without repeated edges. If $G$ has $n$ vertices, let $\lambda_0\leq \lambda_1\leq ... \leq \lambda_{n-1}$ denote the eigenvalues of $\Delta_\theta.$ 

 Let $K_n$ be the complete, loopless, undirected graph on the $n$ vertices of $G$. Orient $K_n$ with some orientation such that the orientation of its restriction to $G$ is the orientation on $G$. Suppose $H$ is a $d$-regular directed subgraph of $K_n,$ with $G$ a directed subgraph of $H.$ This, in particular, implies that $H$ is connected on $n$ vertices. Note this is always possible by taking $H=K_n$ and with $d=n-1$. However, for example, for $G=C_6,$ $d$ may be taken to be $2,3,4,$ or $5.$ Let $H^c$ be the graph complement of $H$ in $K_n$ with the induced orientation. If $e=uv$ is an oriented edge in $K_n$ we shall denote $\bar{e}:=vu$. We call two vertices $u,v$ adjacent and write $u\sim v$ if there is some oriented edge between them, either $u\rightarrow v$ or $v\rightarrow u.$ If the graph $G$ is not clear from the context, we write $\sim_{G}$ if we wish to restrict the relation to $G.$ 
Then let $Z:=\mathscr{V}\times \mathscr{V}$ and $\mathscr{M}:=\mathscr{P}(Z).$ We shall denote pairs $(u,v)$ in $Z$ by $uv$. Let $a,b\geq 0.$ Define $\mu$ on $\mathscr{M}$ by $\mu(e) = \mu(\bar{e})=1$ for $e$ an edge in $H$, $\mu(e)=\mu(\bar{e})=a$ for $e$ an edge in $H^c$, and $\mu(u,u)=b$ for all $u.$ 
Let $\alpha:\mathscr{E}\rightarrow [0,2 
\pi].$ Then, extend $\alpha$ and $\theta$ to all of $K_n$ by setting them equal to $0$ outside of edges of $G.$

Define $\phi_{\alpha,H}:Z\rightarrow C(\mathscr{V})\cong \mathbb{C}^{n}$ by $\phi_{\alpha,H}(uv):=b_{uv},$ where
\begin{displaymath}
   b_{uv} := \left\{
     \begin{array}{lr}
       \hat{v}-e^{i\alpha(uv)}\hat{u}&,\; uv \in \mathscr{E}_{K_n}\\
       \hat{v}+e^{-i\alpha(vu)}\hat{u}&,\; vu \in \mathscr{E}_{K_n}\\
       \hat{u}&,\;\;\;\;\;\;  u=v. 
     \end{array}
   \right.
\end{displaymath} 
Hence for any $f,$ 
\begin{displaymath}
   |\langle f, b_{uv} \rangle|^2 = \left\{
     \begin{array}{lr}
       |f(u)|^2+|f(v)|^2 - 2Re(e^{i\alpha(uv)}\overline{f(u)}{f(v)}) & ,\; uv \in \mathscr{E}_{K_n}\\
       |f(u)|^2+|f(v)|^2 + 2Re(e^{-i\alpha(vu)}\overline{f(u)}{f(v)}) & ,\; vu \in \mathscr{E}_{K_n}\\
       |f(u)|^2 \;& ,\;\;\;\;\;\;  u=v. 
     \end{array}
   \right.
\end{displaymath} 

We wish to calculate $\sum_{uv\in Z} \mu(uv)|\langle f, b_{uv} \rangle|^2.$ Note that for $uv$ an edge in $K_n$, we have $|\langle f, b_{uv} \rangle|^2+|\langle f, b_{vu} \rangle|^2=2|f(u)|^2+2|f(v)|^2.$ For $u$ fixed and for any vertex $v,$ exactly one of the three following possibilities occurs: $v=u$, $v$ is adjacent to $u$ in $H$, or $v$ is adjacent to $u$ in $H^c$. 
Hence, since edges and their opposites occur in pairs in both $H$ and $H^c$, and since $H$ is $d$-regular and $H^c$ is $n-1-d$ regular, we have 
\begin{align*}
&\sum_{uv\in Z} \mu(uv)|\langle f, b_{uv} \rangle|^2 \\&= \sum_u \sum_{v\sim_H u}  (|f(u)|^2+|f(v)|^2) + a(\sum_u \sum_{v\sim_{H^c} u}  (|f(u)|^2+|f(v)|^2))\\&= (d+a(n-1-d)+b)\|f\|^2+d\|f\|^2+(n-1-d)a\|f\|^2 \\&=2(d+a(n-1-d)+\frac{b}{2})\|f\|^2.
\end{align*} Let $C(a,b,d):=d+a(n-1-d)+\frac{b}{2}.$ Then \[\sum_{uv\in Z} \mu(uv)|\langle f, b_{uv} \rangle|^2 = 2C(a,b,d)\|f\|^2.\]

Let $Z_0\subset Z.$  
Now we calculate $\sum_{uv \in Z_0} \langle b_{uv},\Delta_{\theta}b_{uv}\rangle.$ There are four cases. If $uv$ is an oriented edge in $G,$ then
\[\langle b_{uv},\Delta_{\theta}b_{uv}\rangle = |1+e^{i(\alpha(uv)+\theta(uv))}|^2+d_u-1 +d_v-1 = d_u+d_v+2\cos(\alpha(uv)+\theta(uv)).\] If $vu$ is an oriented edge in $G,$ then \[\langle b_{uv},\Delta_{\theta}b_{uv}\rangle = |1-e^{-i(\alpha(vu)+\theta(vu))}|^2+d_u-1 +d_v-1 = d_u+d_v-2\cos(\alpha(vu)+\theta(vu)).\] If $u\neq v$ and neither $uv$ nor $vu$ is an oriented edge in $G,$ then \[\langle b_{uv},\Delta_{\theta}b_{uv}\rangle = d_u+d_v.\] Lastly, for any $v$, \[\langle b_{vv},\Delta_{\theta}b_{vv}\rangle = d_v.\] 
Hence \begin{equation} \label{m}
\begin{split}
&\sum_{uv \in Z_0} \langle b_{uv},\Delta_{\theta}b_{uv}\rangle \mu(uv) = \sum_{\{uv\in Z_0\;|\; uv \in \mathscr{E}\}} d_u+d_v +2\cos(\alpha(uv)+\theta(uv)) \\&+
\sum_{\{uv\in Z_0\;|\; vu \in \mathscr{E}\}} d_u+d_v  -2\cos(\alpha(vu)+\theta(vu))\\&+
\sum_{\{uv\in Z_0\;|\; uv,vu \notin \mathscr{E},u\neq v, uv\; \mbox{or}\; vu \in 
\mathscr{E}_H\}} d_u+d_v \\ &+ 
a(\sum_{\{uv\in Z_0\;|\; uv,vu \notin \mathscr{E},u\neq v, uv,vu \notin \mathscr{E}_H\}} 
d_u+d_v) + b(\sum_{\{v\;|\;vv\in Z_0\}} d_v).
\end{split}
\end{equation}

For $A$ a finite set, let $|A|$ denote the cardinality of $A.$ Then we have 
\begin{equation} \label{n}
\begin{split} \sum_{uv\in Z_0} \|b_{uv}\|^2\mu(uv)& = 2|\{uv\in Z_0\;|\;uv \;\mbox{or} \;vu\in \mathscr{E}_H\}| \\ &+ 2a|\{uv\in Z_0\;|\;uv\; \mbox{or}\; vu \in \mathscr{E}_{H^c}\}| + b|\{u\;|\;uu\in Z_0\}|.
\end{split}
\end{equation}
Hence by (\ref{f}) following Theorem \ref{c}, if $k$ is such that $2kC(a,b,d)\leq \sum_{uv\in Z_0} \|b_{uv}\|^2\mu(uv),$ then \[\sum_{j=0}^{k-1} \lambda_j \leq \frac{\sum_{uv \in Z_0} \langle b_{uv},\Delta_{\theta}b_{uv}\rangle \mu(uv)}{2C(a,b,d)}.\] 

We may achieve great simplifications of the above inequality if we take $Z_0$ to contain only edges or reverse edges of $\mathscr{E},$ or also, if needed, ``loops" of the form $uu$.

Before continuing, we note the following. The quantity $Z_G:=\sum_v d_v^2$ is known in graph theory literature as the first Zagreb index of $G$ (see \cite{zag}), where $d_v$ is the degree of $v$ in $G$. Then note that
\[\sum_{uv\in\mathscr{E}} (d_u+d_v) = Z_G.\] Indeed, for each $v,$ $d_v$ 
appears once in exactly $d_v$ terms in the sum. 

For what follows, let $Z_0=\mathscr{E}.$ 
Then (\ref{m}) simplifies to \[\sum_{uv \in Z_0} \langle b_{uv},\Delta_{\theta}b_{uv}\rangle=Z_G+2\sum_{e\in \mathscr{E}} \cos(\alpha(e)+\theta(e)).\] Since we will be wishing to minimize this quantity, we define $\alpha$ such that \[\alpha(e)+\theta(e) \equiv \pi \mbox{\;(mod\;}  2\pi).\] Then \[\sum_{uv \in Z_0} \langle b_{uv},\Delta_{\theta}b_{uv}\rangle=Z_G-2|\mathscr{E}|.\] 

Applying Theorem \ref{c} and using that the right hand side of equation (\ref{n}) simplifies to $2|\mathscr{E}|$, we have that if $2kC(a,b,d)\leq 2|\mathscr{E}|,$ then \[2C(a,b,d)\sum_{j=0}^{k-1}\lambda_j\leq Z_G-2|\mathscr{E}|.\] However, since $C(a,b,d)$ can take on any number greater than or equal to $d,$ if $k\leq \frac{|\mathscr{E}|}{d},$ then the optimal choice is $C(a,b,d)=\frac{|\mathscr{E}|}{k}.$ Hence, we have proven the following theorem.

\begin{theorem} Suppose $G$ is a directed graph arising from orienting a connected, loopless, undirected graph without repeated edges. Let $d_0$ be the degree  of a regular subgraph $H$ of $K_n$ containing $G$ as a subgraph. Then if $k$ is an integer with $k \leq \frac{|\mathscr{E}|}{d_{0}},$ we have \[\frac{1}{k}\sum_{j=0}^{k-1}\lambda_j \leq \frac{Z_G}{2|\mathscr{E}|}-1.\]
\end{theorem}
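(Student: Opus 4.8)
The plan is to specialize the general machinery already assembled just before the statement, choosing the free parameters so that the inequality from (\ref{f}) collapses to the claimed form. The groundwork has essentially been done: with $Z_0 = \mathscr{E}$ and the phase $\alpha$ chosen so that $\alpha(e) + \theta(e) \equiv \pi \pmod{2\pi}$, equation (\ref{m}) gives $\sum_{uv \in Z_0} \langle b_{uv}, \Delta_\theta b_{uv}\rangle\, \mu(uv) = Z_G - 2|\mathscr{E}|$, and equation (\ref{n}) gives $\sum_{uv \in Z_0} \|b_{uv}\|^2\, \mu(uv) = 2|\mathscr{E}|$, since every pair in $Z_0 = \mathscr{E}$ is an edge of $G \subseteq H$ and none is a loop. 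So the only thing left is to verify the hypothesis of (\ref{f}) and optimize over $C(a,b,d)$.

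First I would recall that (\ref{f}) applies once $\mu_k \sum_{j=0}^{k-1}\int_Z |\langle \phi, u_j\rangle|^2 \, d\mu \le \mu_k \int_{Z_0} \|\phi\|^2 \, d\mu$; since $\sum_{uv\in Z}\mu(uv)|\langle f, b_{uv}\rangle|^2 = 2C(a,b,d)\|f\|^2$ for every $f$ (the identity established above), we have $\sum_{j=0}^{k-1}\int_Z |\langle \phi, u_j\rangle|^2\, d\mu = 2C(a,b,d)\cdot k$ (using $\|u_j\| = 1$), while $\int_{Z_0}\|\phi\|^2\, d\mu = 2|\mathscr{E}|$. Hence the hypothesis becomes exactly $2kC(a,b,d) \le 2|\mathscr{E}|$, i.e. $k \le |\mathscr{E}|/C(a,b,d)$. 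Under this condition, (\ref{f}) together with Proposition \ref{a}'s role of $\sum_{j<k}\mu_j$ as the left side (more precisely, the chain $\sum_{j=0}^{k-1}\lambda_j \cdot 2C(a,b,d) = \sum_{j=0}^{k-1}\lambda_j \sum_{uv\in Z}\mu(uv)|\langle u_j, b_{uv}\rangle|^2 \le \sum_{j=0}^{k-1}\int_Z \lambda_j |\langle \phi, u_j\rangle|^2\, d\mu \le \int_{Z_0}\langle \phi, M\phi\rangle\, d\mu$, where the first inequality uses $2C(a,b,d)\|u_j\|^2 \ge \sum_{uv}\mu(uv)|\langle u_j,b_{uv}\rangle|^2$ — in fact equality — wait, more carefully, I should just invoke the already-derived consequence $2C(a,b,d)\sum_{j=0}^{k-1}\lambda_j \le Z_G - 2|\mathscr{E}|$, valid whenever $2kC(a,b,d)\le 2|\mathscr{E}|$).

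The optimization step is then the crux, though it is genuinely easy: $C(a,b,d) = d + a(n-1-d) + b/2$ ranges over all reals $\ge d$ as $a, b$ range over $[0,\infty)$ (take $a = b = 0$ for the value $d$, and increase either to get larger values). So for a fixed integer $k$, the best (smallest) admissible $C(a,b,d)$ subject to $k \le |\mathscr{E}|/C(a,b,d)$, i.e. $C(a,b,d) \le |\mathscr{E}|/k$, and subject to $C(a,b,d) \ge d$, is $C(a,b,d) = |\mathscr{E}|/k$, \emph{provided} $|\mathscr{E}|/k \ge d$, i.e. $k \le |\mathscr{E}|/d$. Writing $d_0 = d$ for the degree of the regular supergraph $H$, this is exactly the hypothesis $k \le |\mathscr{E}|/d_0$. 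Substituting $C = |\mathscr{E}|/k$ into $2C\sum_{j<k}\lambda_j \le Z_G - 2|\mathscr{E}|$ and dividing by $2|\mathscr{E}|$ gives $\frac{1}{k}\sum_{j=0}^{k-1}\lambda_j \le \frac{Z_G}{2|\mathscr{E}|} - 1$, which is the claim.

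I do not anticipate a serious obstacle: essentially all the computational content (the two quadratic-form identities, the choice of $\alpha$, the reduction of (\ref{m}) and (\ref{n})) is already in place in the text preceding the theorem. The only points requiring care are (i) checking that the parameter $C(a,b,d)$ really attains every value in $[d,\infty)$ so the optimization $C = |\mathscr{E}|/k$ is legitimate exactly when $k \le |\mathscr{E}|/d_0$, and (ii) making sure the hypothesis of (\ref{f}) is met with equality rather than merely inequality — which it is, since $\sum_{uv\in Z}\mu(uv)|\langle u_j, b_{uv}\rangle|^2 = 2C(a,b,d)$ exactly. So the proof is a short assembly of the preceding paragraphs plus a one-line optimization.
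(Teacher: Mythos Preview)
Your proposal is correct and follows exactly the paper's own argument: choose $Z_0=\mathscr{E}$, set $\alpha(e)+\theta(e)\equiv\pi$, invoke (\ref{f}) under the condition $2kC(a,b,d)\le 2|\mathscr{E}|$, and optimize over $C\in[d_0,\infty)$. One small slip: you write ``best (smallest) admissible $C$'' but then (correctly) select the \emph{largest} admissible value $C=|\mathscr{E}|/k$, which is what tightens the bound since $Z_G-2|\mathscr{E}|\ge 0$.
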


Note that if $D$ is the degree matrix, $|\mathscr{E}|=\frac{1}{2}\Tr(D)$ and $Z_{G}=\Tr(D^2).$ Hence we may rewrite the above inequality as follows.
For $G$ as in the previous theorem, we have \[\frac{1}{k}\sum_{j=0}^{k-1} \lambda_j \leq \frac{\Tr(D^2)}{\Tr(D)}-1,\;\mbox{for}\;k\;\mbox{a positive integer with}\; k\leq \frac{1}{2d_0}\Tr(D). \]

We may increase the bound on $k$ by admitting a combination of reverse edges of $G,$ and loops $uu$ to $Z_0.$ Then the cosine terms cancel in pairs for reverse edges and loops add terms proportional to the degree.

In \cite{Lieb} the half-filled band, corresponding to the case that $k=\lfloor{\frac{n}{2}}\rfloor,$ is studied. As a corollary we provide an inequality for the half-filled band in the case of a $d-$regular 
graph. Let $H=G$. Then $d_{0}=d.$ Note in this case $Z_G=nd^2$ and $2|\mathscr{E}|=nd.$ Note further that any $\Delta_\theta$ is a sum of magnetic Laplacians corresponding to individual edges, each being a positive operator. It follows that eigenvalue sums for a subgraph are bounded above by corresponding sums for the graph. Therefore we have the following result for the half-band.

\begin{corollary} Suppose $G$ is a directed graph on $n$ vertices arising from orienting a connected, undirected subgraph of a $d$-regular undirected loopless graph on $n$ 
vertices without repeated edges. Then for $k\leq \frac{n}{2},$ we have \[\frac{1}{k}\sum_{j=0}^{k-1}\lambda_j \leq d-1.\] 
\end{corollary}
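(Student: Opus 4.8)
The plan is to deduce the Corollary directly from the Theorem that immediately precedes it, together with the monotonicity remark about subgraphs stated just before the Corollary. First I would reduce to the case where $G$ is itself $d$-regular: if $G$ is a directed graph obtained by orienting a connected subgraph of a $d$-regular loopless graph $\tilde G$ on the same $n$ vertices, then $\Delta_\theta$ (extended by zero on the missing edges, or simply regarded as a sum of edge-wise positive operators) is dominated in the quadratic-form order by $\Delta_{\tilde\theta}$ on $\tilde G$, where $\tilde\theta$ extends $\theta$. Since $\Delta_\theta = \sum_{e} \Delta_\theta^{(e)}$ is a sum of positive operators, one for each edge, adding the edges of $\tilde G \setminus G$ only increases every quadratic form, hence by the min-max characterization (Proposition \ref{a}) increases every partial eigenvalue sum $\sum_{j=0}^{k-1}\lambda_j$. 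So it suffices to prove $\frac{1}{k}\sum_{j=0}^{k-1}\lambda_j \le d-1$ when $G$ is $d$-regular on $n$ vertices.

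Next I would apply the preceding Theorem with the choice $H = G$, so that $d_0 = d$. In that case $G$ is $d$-regular, so $|\mathscr{E}| = \frac{nd}{2}$ and the first Zagreb index is $Z_G = \sum_v d_v^2 = n d^2$. Plugging into the Theorem's bound gives
\[
\frac{1}{k}\sum_{j=0}^{k-1}\lambda_j \le \frac{Z_G}{2|\mathscr{E}|} - 1 = \frac{nd^2}{nd} - 1 = d-1,
\]
valid for every positive integer $k$ with $k \le \frac{|\mathscr{E}|}{d_0} = \frac{nd/2}{d} = \frac{n}{2}$. Combining this with the reduction of the previous paragraph yields the claimed inequality for all $k \le \frac{n}{2}$ in the general (subgraph) case.

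The only genuinely delicate point is the subgraph monotonicity step, and even that is routine once phrased correctly: one must be careful that "subgraph of a $d$-regular graph" is used with the \emph{same vertex set}, so that $C(\mathscr{V})$ is unchanged and the comparison $\langle f,\Delta_\theta f\rangle \le \langle f,\Delta_{\tilde\theta} f\rangle$ makes sense pointwise in $f$; this is immediate from $Q_\theta(f) = \sum_{e \in \mathscr{E}} |f(te) - e^{i\theta(e)}f(se)|^2$ and the fact that enlarging $\mathscr{E}$ only adds nonnegative terms. After that, Proposition \ref{a}, which expresses $\sum_{j=0}^{k-1}\mu_j$ as an infimum of traces over $k$-dimensional subspaces, immediately gives monotonicity of the eigenvalue sums under form domination. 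No further obstacles arise; everything else is the arithmetic substitution above.
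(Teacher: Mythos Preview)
Your proposal is correct and follows essentially the same route as the paper: apply the preceding Theorem to the $d$-regular supergraph $\tilde G$ with the choice $H=\tilde G$ (so $d_0=d$, $Z_{\tilde G}=nd^2$, $2|\mathscr{E}_{\tilde G}|=nd$, giving the bound $d-1$ for $k\le n/2$), and then invoke the monotonicity of eigenvalue sums under adding edges, which the paper justifies by the same observation that $\Delta_\theta$ is a sum of edge-wise positive operators. You simply perform these two steps in the opposite order and spell out the form-domination / Proposition~\ref{a} argument for monotonicity more explicitly than the paper does; note also, as you implicitly rely on, that since $G$ is connected on all $n$ vertices, the $d$-regular supergraph on the same vertex set is automatically connected, so the Theorem applies to it.
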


Note the above bounds hold for all choices of $\theta.$ In particular they hold for the standard combinatorial Laplacian. This connects to the ''flux phase" problem investigated for the case of planar graphs in \cite{Lieb}, that is to find the choice of $\theta$ that maximizes the sum of the half-band eigenvalues. For different classes of graphs, the optimal choice of $\theta$ may vary, leading to the possibility for improvements to the above bounds in such cases for particular choices of $\theta.$ 

We give two simple examples. Let $G=K_3$ with some orientation. The condition is $k\leq \frac{3}{2},$ so the only non-trivial choice for $k$ is $1$. The above inequality reduces to $\lambda_0 \leq 1.$ This is sharp since taking $\theta$ constant equal to $\pi$ on any orientation yields a spectrum of $1,1,4.$

Consider the cycle $C_4.$ Then the spectrum of the standard Laplacian is $0,2,2,4.$ Hence the inequality is sharp at $k=2$ for this example, as the sum of the first half of the spectrum is $2=\frac{n}{2}(d-1),$ where $n=4,$ $d=2.$

\section{Acknowledgments}
I would like to thank Evans Harrell for helpful discussion of the ideas in this paper and comments on a draft. Also, I would like to thank the anonymous reviewer for comments that led to substantial improvements to an earlier version of this paper.

\Refs

\bibitem{beck}
        \by E. Beckenbach and R. Bellman
        \book Inequalities
        \publ Springer 
        \pages 77
        \yr 1965
\endref

\bibitem{Var}
		\by E. M. Harrell II and J. Stubbe
		\paper On sums of graph eigenvalues
		\jour Linear Algebra and its Applications
		\vol 455
		\issue \phantom
		\pages 168--186
		\yr 2014
		
\endref

\bibitem{hig}
		\by Y. Higuchi and T. Shirai
		\paper A remark on the spectrum of magnetic {L}aplacian on a graph
		\jour Yokohama Math Journal
		\vol 47
		\issue \phantom
		\yr 1999
\endref

\bibitem{Lieb}
\by E. H. Lieb and M. Loss
\paper Fluxes, Laplacians, and Kasteleyn’s theorem
\jour Duke Mathematical Journal
\vol 8
\issue 2
\yr 1993
\pages 337--363
\endref

\bibitem{zag}
\by S. Nikoli{\'c} and G. Kova{\v{c}}evi{\'c} and A. Mili{\v{c}}evi{\'c} and N. Trinajsti{\'c}
\paper The {Z}agreb indices 30 years after
\jour Croatica Chemica Acta
\vol 76
\issue 2
\yr 2003
\pages 113--124
\endref

\bibitem{sunada}
\by T. Sunada
\paper Discrete geometric analysis
\book Geometry on Graphs and its Applications, Proceedings of Symposia in Pure Mathematics
\publ American Mathematical Society
\vol 77
\year 2008
\pages 51--86
\endref
\endRefs

\end{document}